\documentclass{amsart}
\IfFileExists{lmodern.sty}{\usepackage[T1]{fontenc}\usepackage{lmodern}}{}
\usepackage{amsmath,amssymb,amsthm}
\usepackage{tikz}
\usetikzlibrary{decorations.pathreplacing,arrows.meta}
\usepackage{hyperref}
\numberwithin{equation}{section}

\theoremstyle{plain}
\newtheorem{theorem}{Theorem}[section]
\newtheorem{lemma}[theorem]{Lemma}
\newtheorem{proposition}[theorem]{Proposition}
\newtheorem{corollary}[theorem]{Corollary}
\newtheorem{conjecture}[theorem]{Conjecture}
\theoremstyle{remark}
\newtheorem{remark}[theorem]{Remark}
\newtheorem{question}[theorem]{Question}

\newcommand{\F}{\mathcal F}
\newcommand{\cP}{\mathcal P}
\newcommand{\cU}{\mathcal U}
\newcommand{\T}{\mathbb T}
\newcommand{\diam}{\operatorname{diam}}
\newcommand{\wn}{\operatorname{wn}}
\newcommand{\id}{\mathrm{id}}
\newcommand{\inte}{\operatorname{int}}

\title{The circle as a topological fractal}
\author{Benjamin Vejnar}
\address{Faculty of Mathematics and Physics, Charles University, Prague, Czech Republic}
\subjclass[2020]{Primary 54F15; Secondary 37B45, 28A80, 54C05}
\keywords{Topological fractal, witnessing number, Peano continuum, circle,
topologically contractive family}

\begin{document}

\begin{abstract}
We prove that no family of two continuous self-maps witnesses that the circle is a topological fractal, answering a question of Karasová and the present author. Since three maps are known to suffice, this bound is optimal. In contrast, for every $\varepsilon>0$ there are two continuous self-maps of the circle, depending on $\varepsilon$, whose images cover the circle and an integer $N$ such that every composition of $N$ of them has image of diameter less than $\varepsilon$. Thus two maps suffice at any prescribed scale, but no fixed pair works at all scales.
\end{abstract}

\maketitle

\section{Introduction}

Let $(X,d)$ be a nonempty compact metric space and let $\F$ be a nonempty finite family
of continuous self-maps of $X$. Put
\[
\F^0=\{\id_X\},\qquad \F^n=\{f_1\circ\dots\circ f_n : f_i\in\F\}\qquad(n\ge1).
\]
We say that $\F$ is \emph{topologically contractive} if for every $\varepsilon>0$ there
exists $n\ge1$ such that
\begin{equation} \label{eq:1.1}
\diam_d w(X)<\varepsilon\qquad\text{for every } w\in\F^n.
\end{equation}
A compact metric space $X$ is called a \emph{topological fractal} if there is a finite
topologically contractive family $\F$ of continuous self-maps such that
\begin{equation*}%\label{eq:1.2}
X=\bigcup_{f\in\F}f(X).
\end{equation*}
The maps in such a family are called \emph{witnessing maps}. This terminology is used,
in particular, in~\cite{KarasovaVejnar}; see also the general theory of contractive function systems
in~\cite{BanachKubisNovosadNowakStrobin}.

Although the definition is written using a metric, it depends only on the topology of
$X$. Indeed, if $d$ and $\rho$ are two compatible metrics on the compact space $X$, then
the identity maps between $(X,d)$ and $(X,\rho)$ are uniformly continuous. Hence, given
$\varepsilon>0$ in the metric $\rho$, there is $\delta>0$ such that every subset of
$d$-diameter less than $\delta$ has $\rho$-diameter less than $\varepsilon$. Applying
\eqref{eq:1.1} with $\delta$ shows that topological contractivity for $d$ implies
topological contractivity for $\rho$, and conversely. Equivalently, one can formulate the
condition without a metric: for every open cover $\cU$ of $X$ there is $n$ such that
every $w(X)$, $w\in\F^n$, is contained in one member of $\cU$. For compact metrizable
spaces the equivalence follows from the Lebesgue number lemma.

The notion is motivated by the classical theory of iterated function systems.
Hutchinson's theorem associates a unique compact invariant set to a finite family of
contractions on a complete metric space~\cite{Hutchinson}. Hata studied the topology of
connected self-similar sets and the relation with Peano continua~\cite{Hata}. In the
topological setting the long-standing converse problem takes the following form;
following~\cite{KarasovaVejnar}, we refer to it as the Hata conjecture.

\begin{conjecture}[Hata]\label{conj:hata}
Every Peano continuum is a topological fractal.
\end{conjecture}

Here a \emph{Peano continuum} is a compact, connected, locally connected metrizable
space. Hata observed that every connected topological fractal is a Peano continuum. The
Conjecture~\ref{conj:hata} remains open. Several partial results are known. In particular every Peano
continuum with uncountably many local cut points is a topological fractal~\cite{KarasovaVejnar}.

For a topological fractal $X$, its \emph{witnessing number}, denoted by $\wn(X)$, is the
least cardinality of a family of witnessing maps. This quantity asks not merely whether
$X$ is a topological fractal, but how many self-maps are required to exhibit its fractal
structure. Karasová and the present author proved that three witnessing maps suffice for every Peano
continuum with uncountably many local cut points, and asked whether the bound is already
optimal for the simplest relevant example, a simple closed curve $S$~\cite{KarasovaVejnar}. In
particular, their result gives
\[
\wn(S)\le3.
\]

Whether two witnessing maps exist is a subtle question. It was noted on MathOverflow \cite{MO_circle} by the author of this paper that for every prescribed $\varepsilon>0$
one can choose two maps which cover the circle and whose compositions at a sufficiently high common level have images smaller than $\varepsilon$. A precise formulation is in Proposition~\ref{prop:finite}. The maps, however, depend on the prescribed $\varepsilon$. The
main result shows that this dependence cannot be removed.

\begin{theorem}\label{thm:main}
Let $S$ be a space homeomorphic to the circle. If continuous maps $f,g\colon S\to S$
satisfy
\[
S=f(S)\cup g(S),
\]
then $\{f,g\}$ is not topologically contractive.
\end{theorem}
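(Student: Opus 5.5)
The plan is to argue by contradiction. Assume $S=f(S)\cup g(S)$ and that $\{f,g\}$ is topologically contractive. I would first reduce to the case where both maps have degree zero, then encode $S$ symbolically by infinite words in $f,g$, and finally compare a global cyclic constraint with a local constraint coming from liftability.

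\emph{Degree reduction.} If $\deg f\neq 0$, then $\deg f^n=(\deg f)^n\neq0$. Hence $f^n\in\F^n$ is surjective, so $\diam f^n(S)=\diam S$ for all $n$, which contradicts \eqref{eq:1.1}. The same holds for $g$. So $\deg f=\deg g=0$, and every $w\in\F^n$ has degree $0$. Thus we can write $f=p\circ F$ and $g=p\circ G$ with lifts $F,G\colon S\to\mathbb R$, where $p\colon\mathbb R\to S$ is the universal covering. In particular each $w(S)$ is the $p$-image of an interval. Contractivity then makes $w(S)$ a short arc (or a point) for $w\in\F^n$ with $n$ large.

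\emph{Address map.} Next I would use the standard address map
\[
\pi\colon\{f,g\}^{\mathbb N}\to S,\qquad \pi(\sigma)=\bigcap_n \sigma_1\circ\dots\circ\sigma_n(S).
\]
Contractivity together with the covering condition makes $\pi$ a continuous surjection, so $S$ is a quotient of the Cantor set. The main combinatorial object is the set of \emph{junction points}: points of $f(S)\cap g(S)$, which have addresses beginning with both $f$ and $g$. There are two cases.
\begin{itemize}
\item If $f(S)$ and $g(S)$ are both proper arcs, then because they cover the circle, $f(S)\cap g(S)$ has exactly two components $A$ and $B$.
\item If, say, $f(S)=S$, then since $\deg f=0$ the map $f$ folds. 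In that case I would pass to a suitable level $n$ and work with $\F^n$ instead, where all images are short arcs. The same dichotomy then applies to the cyclically ordered chain $\{w(S):w\in\F^n\}$.
\end{itemize}

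\emph{Cyclic versus local constraint.} The key step is to exploit this dichotomy. At every level $n$, the small arcs $w(S)$, $w\in\F^n$, form a chain cover of $S$ whose nerve carries an essential loop. Splitting that loop according to the first letter of $w$, it crosses between the $f$-block and the $g$-block exactly at the two junctions $A$ and $B$. Each block is an arc, and inside it the chain is the image under $f$ (resp.\ $g$) of the full level-$(n-1)$ cover. Because $F$ is a lift to $\mathbb R$, the image under $f$ of the essential level-$(n-1)$ loop is null-homotopic. So the $f$-block, traversed via $f$, runs out and back along the arc $F(S)$, and in doing so it must visit its two extreme pieces. I would then track which pieces at level $n-1$ map onto the endpoints of $f(S)$ and $g(S)$, that is, onto the pieces adjacent to $A$ and $B$. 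The aim is a self-similar consistency condition on the addresses of the four endpoints, two for each of the arcs $f(S)$ and $g(S)$. Such a condition cannot hold with only two letters: an arc built this way has two ends, a circle cut at two junctions needs four arc-ends glued in pairs, and the folding forced by degree zero reuses ends. The expected conclusion is that some $w\in\F^n$ must contain both ends of a block, so $\diam w(S)$ stays bounded below, contradicting contractivity.

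\emph{Main obstacle.} I expect the hardest step to be turning the heuristic ``degree zero forces folding, and folding plus two junctions is incompatible with shrinking'' into a rigorous statement about the endpoint addresses. In particular, $\pi$ may identify many addresses, and the level-$(n-1)$ pieces need not be consecutive along $S$ even when their $f$-images are. My first attempt would be to follow only the finitely many address classes of the four block endpoints and of $A$ and $B$ under the shift. I would try to prove that the resulting finite directed graph has a cycle along which diameters cannot decrease, which would contradict \eqref{eq:1.1}.
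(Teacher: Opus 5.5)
There is a genuine gap. The decisive step is only a heuristic, and in the form you sketch it cannot work.

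Your preliminaries are fine, and simpler than you make them. If $f(S)=S$ then $f^n(S)=S$ for all $n$, so contractivity alone makes $f(S)$ and $g(S)$ nondegenerate proper arcs. Degree zero then comes for free, and the second case of your dichotomy never arises. The topological input you isolate is also the right one: if $f(u),f(v)$ are the endpoints of $f(S)$, each of the two arcs of $S$ between $u$ and $v$ is mapped onto $f(S)$.

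But the inconsistency you hope for (``an arc has two ends, two junctions need four ends glued in pairs, folding reuses ends'') cannot be read off from the configuration at any single level, or any finite range of levels. By Proposition~\ref{prop:finite}, for every $\varepsilon<\frac12$ there is a covering pair $f_L,g_L$ of degree-zero maps all of whose level-$N$ images have diameter $<\varepsilon$. In particular, no such image contains both endpoints $0,\frac12$ of the block $f_L(\T)$. Everything your sketch uses at level $n$ (covering, degree zero, folding, short level-$n$ arcs) holds for this pair at level $N$, yet your expected conclusion fails for it. Its failure of contractivity is witnessed only by a $g_L$-invariant arc of length $1/L$ (Remark~\ref{rem:quant}), which is invisible at scales above $1/L$. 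A proof therefore has to control all scales at once. Your finite directed graph of endpoint address classes has no mechanism forcing a cycle along which diameters do not shrink.

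What is missing is a quantitative, scale-dependent size that the maps cannot increase. The paper passes to the metric $\rho(x,y)=\sup_w d(w(x),w(y))$, the supremum over all compositions $w$, in which $f$ and $g$ are nonexpanding (Lemma~\ref{lem:metric}). It measures a set by $N_\varepsilon$, the least number of connected sets of diameter at most $\varepsilon$ covering it. Your folding observation then yields $2N_\varepsilon(f(S))\le N_\varepsilon(S)+8-\Delta$ and $2N_\varepsilon(g(S))\le N_\varepsilon(S)+4$, where $\Delta=N_\varepsilon(J)-N_\varepsilon(f(J))$ for an arc $J$ whose interior misses $u,v$. Meanwhile $f(S)\cup g(S)=S$ gives $N_\varepsilon(S)\le N_\varepsilon(f(S))+N_\varepsilon(g(S))$, hence $\Delta\le12$ for every $\varepsilon$ (Lemma~\ref{lem:loss}). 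This is exactly where ``two'' enters: two doubly covered pieces must add up to the whole up to a bounded error, whereas the same count with three images leaves slack of order $N_\varepsilon(S)$.

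Finally, the nested nondegenerate arcs $f^n(S)$ shrink to a fixed point $p$ of $f$. So for large $n$ the interior of $J=f^n(S)$ misses $u,v$; if $p\in\{u,v\}$, then $p=f(p)$ is an endpoint of $f(S)$ and hence of $J$. An end arc $L$ of $J\setminus f(J)$ then has $N_\varepsilon(L)\le16$ for all $\varepsilon$, contradicting $\rho(a,b)\le\varepsilon N_\varepsilon(L)$ for its distinct endpoints $a,b$. Your address-based approach would need some substitute for this uniform-in-$\varepsilon$ loss bound, and the proposal does not supply one.
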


As a consequence, we answer Question 27 from \cite{KarasovaVejnar}.

\begin{corollary}\label{cor:wn}
The witnessing number of the circle is equal to three.
\end{corollary}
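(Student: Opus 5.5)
The plan is to combine the upper bound already available in the literature with Theorem~\ref{thm:main} and a trivial argument for one map. Throughout, $S$ denotes a simple closed curve, and we fix any compatible metric $d$ on $S$. By the remark in the introduction, topological contractivity does not depend on the choice of compatible metric.

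\emph{Upper bound.} Every point of $S$ is a local cut point: removing a point from a small arc neighbourhood disconnects it. Hence $S$ is a Peano continuum with uncountably many local cut points. The theorem of Karasová and the present author~\cite{KarasovaVejnar} then yields three witnessing maps. In particular $S$ is a topological fractal, so $\wn(S)$ is defined, and
\[
\wn(S)\le 3 .
\]
If one wants to be self-contained, one could instead exhibit three explicit maps. Each maps $S$ onto one of three closed arcs covering $S$, by folding $S$ onto an arc in a piecewise linear way so that compositions shrink. However, citing the cited result is what I would do.

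\emph{Lower bound.} We must exclude witnessing families of cardinality one and two.

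\begin{itemize}
\item[(a)] Suppose $\F=\{f\}$ witnesses. Then $S=f(S)$, so $f$ is surjective. Consequently every iterate $f^n$ is surjective, and
\[
\diam_d f^n(S)=\diam_d S>0 \qquad\text{for all } n .
\]
Taking $\varepsilon=\diam_d S$ contradicts~\eqref{eq:1.1}.
\item[(b)] Suppose $\F$ witnesses and has exactly two elements $f,g$. Then $S=f(S)\cup g(S)$ by the covering condition, and $\{f,g\}$ is topologically contractive. This is exactly what Theorem~\ref{thm:main} forbids.
\end{itemize}

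One subtlety should be noted. If one allows the two "members" of a family to coincide, that case reduces to (a). Alternatively, Theorem~\ref{thm:main} itself covers $f=g$, since the statement does not require $f\neq g$. Thus $\wn(S)\ge 3$.

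Combining the two bounds gives $\wn(S)=3$. There is no genuine obstacle here, since all the difficulty sits in Theorem~\ref{thm:main}. The only points to check carefully are the following:
\begin{itemize}
\item the hypothesis of~\cite{KarasovaVejnar} (uncountably many local cut points) genuinely applies to the circle;
\item the degenerate case of a one-element family is handled, as in (a) above.
\end{itemize}
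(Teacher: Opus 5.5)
Your proposal is correct and is essentially the paper's proof: you take the upper bound $\wn(S)\le 3$ from \cite{KarasovaVejnar}, exclude a single map because it must be surjective and hence so are all its iterates, and exclude two maps by Theorem~\ref{thm:main}. Your added checks are valid details the paper leaves implicit: every point of the circle is a local cut point, and Theorem~\ref{thm:main} does not require $f\neq g$.
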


\begin{proof}
The upper bound $\wn(S)\le3$ is simple and it is contained in~\cite{KarasovaVejnar}. A single witnessing map on a
nondegenerate compact space would have to be surjective, and hence all of its iterates
would be surjective, so one map cannot be topologically contractive.
Theorem~\ref{thm:main} excludes two witnessing maps. Therefore $\wn(S)\geq3$.
Consequently $\wn(S)= 3$.
\end{proof}

The proof of Theorem~\ref{thm:main} is contained in Section \ref{sectionmain} and it is based on Lemmas from Sections \ref{sectionmetric}, \ref{sectionCovering}, and \ref{sectionbounded}. We first replace the metric by an equivalent metric in which the two maps are nonexpanding.
We then associate to each arc the minimum number of connected sets of small diameter
required to cover it. If we suppose for contradiction that a topologically contracting two-map covering of the circle exists, we find a uniform bound on how much
the covering number can decrease under one of the maps. The shrinking iterates of
that map force the loss of a fixed nondegenerate end arc, producing a contradiction.

\section{A two-map approximation}

In this section, we use the following metric representation of the circle. Equip
\[
\T=\mathbb R/\mathbb Z
\]
with the normalized intrinsic metric
\begin{equation*} %\label{eq:2.1}
d_\T([x],[y])=\min_{k\in\mathbb Z} |x-y-k|.
\end{equation*}

\begin{proposition}\label{prop:finite}
For every $\varepsilon>0$ there are continuous maps $f,g\colon\T\to\T$ and an integer
$N\ge1$ such that
\begin{equation*} %\label{eq:2.2}
f(\T)\cup g(\T)=\T
\end{equation*}
and
\begin{equation*} %\label{eq:2.3}
\diam_{d_\T}w(\T)<\varepsilon\qquad\text{for every } w\in\{f,g\}^N.
\end{equation*}
The maps may be chosen so that $g=f+\tfrac12\pmod 1$.
\end{proposition}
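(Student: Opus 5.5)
The plan is to build $f$ as a perturbation of the two inverse branches of the doubling map. The maps $x\mapsto x/2$ and $x\mapsto x/2+\tfrac12$ already have the two required features. Their images $[0,\tfrac12]$ and $[\tfrac12,1]$ cover $\T$, and every composition of $N$ of them has image of length $2^{-N}$. The only defect is that $x\mapsto x/2$ is not well defined on $\T$, because it jumps at $0\equiv1$. Fix a small $\eta>0$ depending on $\varepsilon$. Let $f$ equal $x/2$ on $[\eta,1-\eta]$, and on the window $W=[1-\eta,1]\cup[0,\eta]$ around $0$ let $f$ be piecewise linear with large slope. The window part is chosen so that $f(\T)$ is an arc slightly longer than $\tfrac12$, say $[-\eta',\tfrac12+\eta']$ with $\eta'\ll\eta$. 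Then put $g=f+\tfrac12$. The covering identity $f(\T)\cup g(\T)=\T$ holds because the two image arcs are translates by $\tfrac12$ of an arc of length $>\tfrac12$. One concrete choice for $f$ on $W$ is a short zig-zag going from $(1-\eta)/2$ up to $\tfrac12+\eta'$ and then down through $\tfrac12$ to $\eta/2$, passing through $-\eta'$ modulo $1$. The point of the zig-zag is that the steep part only produces values in $\eta'$-neighbourhoods of $0$ and $\tfrac12$.

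The proof of the diameter bound tracks arcs. Let $J$ be an arc of length $\ell$ with $\ell$ small and $\ell\gg\eta$. If $J$ misses $W$, then $f(J)$ and $g(J)$ have length $\ell/2$. If $J$ meets $W$, then $f(J)$ has length at most $\ell/2+c\eta'$, where $c$ is an absolute constant. This uses that the extra values produced on the window lie in tiny neighbourhoods of $0$ and $\tfrac12$, which are themselves near values of $x/2$ on $J\setminus W$. I then define $a_n=\max_{w\in\{f,g\}^n}\diam w(\T)$. Since $w\circ h(\T)=w(h(\T))$, one application adds a map on the left: we look at $h(J)$ for $J=v(\T)$ with $v\in\{f,g\}^{n}$. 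The recursion gives
\[
a_{n+1}\le \tfrac12 a_n + c\eta' \qquad(\text{while } a_n\ge\eta).
\]
Iterating, $a_n\le 2^{-n}+2c\eta'$. So I choose $\eta'<\varepsilon/(4c)$ and $\eta<\varepsilon/4$, and then $N$ with $2^{-N}<\varepsilon/2$. Once $a_n$ drops below $\eta$, I stop the iteration at that level and do not analyse it further.

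The main obstacle is the second case, an image arc that meets the steep window $W$. If a long arc $J$ contains all of $W$, the steep part can stretch a tiny piece of $J$ onto an arc of length about $\tfrac12$. A careless design then makes $a_n$ stall near $\tfrac12$. Two points have to be checked. First, the zig-zag on $W$ must take values only in $\eta'$-neighbourhoods of the endpoints of the ``linear'' image. This makes the image of $J\cap W$ lie within $\eta'$ of $f(J\setminus W)$ whenever $J\setminus W$ contains points near both sides of $W$. Second, for the two shortest arcs $f(\T)$ and $g(\T)$, which both cross $W$ or $W+\tfrac12$, I must verify directly that $f(f(\T))$, $f(g(\T))$, $g(f(\T))$ and $g(g(\T))$ have length about $\tfrac14+O(\eta')$. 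This base case is where I would first test the construction explicitly. If the zig-zag fails there, the fallback is to let $f$ on $W$ be nearly constant except on a sub-window of length $\eta^2$. That sub-window then absorbs the needed stretching, and one checks that later image arcs (length $\sim 2^{-n}$) meet it only while $2^{-n}\gtrsim\eta$, which costs at most $O(\log(1/\eta))$ extra levels in the choice of $N$.
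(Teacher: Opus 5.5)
Your construction cannot work, and it breaks at the point you flag as the first thing to check. Since $f(\T)$ is a proper arc, $f$ has degree $0$. Its lift increases by $\tfrac12-\eta$ across $[\eta,1-\eta]$, so it must decrease by the same amount across $W$. Hence the steep part cannot take values only near $0$ and $\tfrac12$; it has to sweep across the whole image.

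Your own zig-zag does exactly this. Write $W=[-\eta,\eta]$. The zig-zag contains an arc $D=[s,t]\subseteq W$ with $f(s)=\tfrac12+\eta'$ and $f(t)=-\eta'$, so a tiny arc is mapped across all of $A=f(\T)$. Consequently the local estimate $\diam f(J)\le \ell/2+c\eta'$ is false for arcs $J$ containing a substantial part of $D$, and the recursion $a_{n+1}\le\tfrac12 a_n+c\eta'$ has no basis.

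The conclusion itself also fails for this design: $a_n\ge\tfrac14$ for every $n$. Choose $c\in D$ with $f(c)=\tfrac14$, and recall that $f(\T)\cap W=[-\eta',\eta]$ and $g(\T)\cap W=[-\eta,\eta']$.
\begin{itemize}
\item If $c\ge-\eta'$, then $I=[c,t]\subseteq f(\T)$ and $f(I)\supseteq[-\eta',\tfrac14]\supseteq I$.
\item If $c\le\eta'$, then $I=[s,c]\subseteq g(\T)$ and $g(I)=f(I)+\tfrac12\supseteq[\tfrac34,1+\eta']\supseteq I$.
\end{itemize}
Since $-\eta'<\eta'$, at least one case occurs. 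By induction, $f^n(\T)\supseteq f(I)$, respectively $g^n(\T)\supseteq g(I)$, for every $n$. So one of $f^n,g^n$ never has image of diameter below $\tfrac14$.

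Your level-$2$ test would not detect this. For $D$ centred at $0$ the four level-$2$ images really do have length about $\tfrac14$, and the stall only begins at level $3$. The fallback with a sub-window of length $\eta^2$ does not help either: it only shortens $D$, and the argument above never used the length of $D$.

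The missing idea is a way to pay for the return sweep without concentrating it on a short arc with a large image. A degree-$0$ map of $\T$ onto an arc of length at least $\tfrac12$ has total variation at least $1$. So slope $\tfrac12$ on most of the circle forces a full-size expanding sweep somewhere, and the level-$n$ images, which cover $\T$, must meet it.

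The paper avoids this as follows. Both the outgoing and the return sweep are spread over half-circles with slope $\pm r$, where $r=\frac{L-2}{L-1}<1$. Expansion (slope $\pm2$) is allowed only on two arcs of length $\frac1{2L}$, whose images have length $\frac1L<\varepsilon$. Every cylinder image of positive level lies in $[0,\tfrac12]$ or $[\tfrac12,1]$, and $f_L$ is unimodal on each. This gives $\diam f_L(J)\le\max\{\tfrac1L,\,r\diam J\}$, and hence $D_n\le\max\{\tfrac1L,\tfrac12 r^{\,n-1}\}$. The price is a contraction rate close to $1$ rather than $\tfrac12$.
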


\begin{proof}
Fix an integer $L\ge3$, and set
\[
\delta=\frac1{2L},\qquad r=\frac{L-2}{L-1}<1.
\]
Using representatives in $[0,1]$, define $f_L\colon\T\to\T$ by
\begin{equation*}%\label{eq:2.4}
f_L(x)=
\begin{cases}
\dfrac12-\dfrac1L+2x, & 0\le x\le\delta,\\[8pt]
\dfrac12-r(x-\delta), & \delta\le x\le\dfrac12,\\[8pt]
\dfrac1L-2\Bigl(x-\dfrac12\Bigr), & \dfrac12\le x\le\dfrac12+\delta,\\[8pt]
r\Bigl(x-\dfrac12-\delta\Bigr), & \dfrac12+\delta\le x\le1.
\end{cases}
\end{equation*}
The values at the breakpoints agree, and
\[
f_L(0)=f_L(1)=\frac12-\frac1L,
\]
so $f_L$ is a continuous circle map; see Figure~\ref{fig:fL}. Put
\[
g_L(x)=f_L(x)+\frac12\pmod 1.
\]
The breakpoint values give
\begin{equation*}
f_L(\T)=\Bigl[0,\frac12\Bigr],\qquad g_L(\T)=\Bigl[\frac12,1\Bigr].
\end{equation*}

\begin{figure}[htb]
\centering
\begin{tikzpicture}[x=5.8cm,y=5.8cm,font=\small]
  \fill[black!10] (0.5,0.5) rectangle (0.7,0.7);
  \foreach \x in {0.1,0.5,0.6} \draw[black!30,densely dotted] (\x,0) -- (\x,1);
  \foreach \y in {0.2,0.3,0.7} \draw[black!30,densely dotted] (0,\y) -- (1,\y);
  \draw[black!55,dashed] (0,0.5) -- (1,0.5);
  \draw (0,0) rectangle (1,1);
  % graphs (L = 5: delta = 1/10, r = 3/4)
  \draw[very thick] (0,0.3) -- (0.1,0.5) -- (0.5,0.2) -- (0.6,0) -- (1,0.3);
  \draw[very thick,dashed] (0,0.8) -- (0.1,1) -- (0.5,0.7) -- (0.6,0.5) -- (1,0.8);
  \foreach \x/\y in {0/0.3,0.1/0.5,0.5/0.2,0.6/0,1/0.3} \fill (\x,\y) circle (1.1pt);
  \foreach \x/\y in {0/0.8,0.1/1,0.5/0.7,0.6/0.5,1/0.8} \draw[fill=white] (\x,\y) circle (1.1pt);
  % tick labels
  \foreach \x/\l in {0/0,0.1/\delta,0.5/\tfrac12,0.6/\tfrac12+\delta,1/1}
    \node[below] at (\x,-0.004) {$\l$};
  \foreach \y/\l in {0/0,0.2/\tfrac1L,0.3/\tfrac12-\tfrac1L,0.5/\tfrac12,0.7/\tfrac12+\tfrac1L,1/1}
    \node[left] at (-0.004,\y) {$\l$};
  % graph labels
  \node at (0.24,0.455) {$f_L$};
  \node at (0.33,0.92) {$g_L$};
  \node[font=\scriptsize] at (0.41,0.40) {slope $-r$};
  \node[font=\scriptsize] at (0.87,0.085) {slope $r$};
  \node[font=\scriptsize] at (0.685,0.665) {$E_L\times E_L$};
  % braces for the images
  \draw[decorate,decoration={brace,amplitude=4pt}] (1.02,0.5) -- (1.02,0)
     node[midway,right=5pt] {$f_L(\T)$};
  \draw[decorate,decoration={brace,amplitude=4pt}] (1.02,1) -- (1.02,0.5)
     node[midway,right=5pt] {$g_L(\T)$};
  % E_L on the x-axis
  \draw[thick] (0.5,-0.115) -- (0.7,-0.115) node[midway,below] {$E_L$};
  \draw[thick] (0.5,-0.10) -- (0.5,-0.13) (0.7,-0.10) -- (0.7,-0.13);
\end{tikzpicture}
\caption{The maps $f_L$ and $g_L$, drawn for $L=5$.}
\label{fig:fL}
\end{figure}

Let $J$ be a subarc of either of the two semicircles \([0,1/2]\) or \([1/2,1]\). On
each of these semicircles, $f_L$ has one branch of domain length $\delta$ and absolute slope $2$,
and one branch of absolute slope $r$. The two branches meet at an extremum. Hence
\begin{equation}\label{eq:2.6}
\diam f_L(J)\le\max\Bigl\{\frac1L,\ r\diam J\Bigr\}.
\end{equation}
Indeed, the variation on the short branch is at most $2\delta=1/L$, while on the other
branch it is at most $r\diam J$. If $J$ meets both branches, the total image diameter is
the larger of the two one-sided variations from the common extremum. Since $g_L$ differs
from $f_L$ by an isometry of the target, \eqref{eq:2.6} also holds with $g_L$ in place of
$f_L$.

For $n\ge1$ set
\[
D_n=\max\{\diam w(\T) : w\in\{f_L,g_L\}^n\}.
\]
Clearly $D_1=1/2$. Every cylinder image of positive level is a connected
subarc of one of the two semicircles. Thus
\[
D_{n+1}\le\max\Bigl\{\frac1L,\ rD_n\Bigr\},
\]
and induction gives
\begin{equation}\label{eq:2.7}
D_n\le\max\Bigl\{\frac1L,\ \frac12r^{\,n-1}\Bigr\}.
\end{equation}

Given $\varepsilon>0$, choose
\[
L>\max\Bigl\{2,\frac1\varepsilon\Bigr\}.
\]
Then $1/L<\varepsilon$. Since $0<r<1$, choose $N$ so large that
$\frac12r^{N-1}<\varepsilon$. Equation \eqref{eq:2.7} gives $D_N<\varepsilon$.
\end{proof}

\begin{remark}\label{rem:quant}
Proposition~\ref{prop:finite} has the quantifier order
\[
\forall\varepsilon>0\ \exists f,g,N,
\]
whereas two witnessing maps would require one fixed pair satisfying
\[
\exists f,g\ \forall\varepsilon>0\ \exists N.
\]
The distinction is essential. For the maps in the proof, put
\[
E_L=\Bigl[\frac12,\frac12+\frac1L\Bigr].
\]
Then \(
g_L(E_L)=E_L
\) (see Figure \ref{fig:fL}).
Consequently $E_L\subseteq g_L^n(\T)$ for every $n$, so the fixed pair $\{f_L,g_L\}$ is
not topologically contractive. Theorem~\ref{thm:main} shows that no other fixed covering
pair can be topologically contractive either.
\end{remark}

\section{A common nonexpanding metric}\label{sectionmetric}

For a compatible metric $d$ on a compact space $X$ and a finite family $\F$ of continuous self-maps of $X$, write
\begin{equation}\label{eq:3.1}
D_n(d,\F)=\max_{w\in\F^n}\diam_d w(X).
\end{equation}
The sequence $D_n(d,\F)$ is clearly nonincreasing
and topological contractivity of $\F$ is equivalent to $D_n(d,\F)\to0$.
The following lemma is essentially a consequence of the proof of \cite[Theorem 6.3]{BanachKubisNovosadNowakStrobin}. For the sake of completeness, we include it here with the proof.

\begin{lemma}\label{lem:metric}
Let $(X,d)$ be a nonempty compact metric space and let $\F$ be a finite topologically
contractive family of continuous self-maps of $X$. Put $W=\bigcup_{n\ge0}\F^n$ and
define
\begin{equation}\label{eq:3.2}
\rho(x,y)=\sup_{w\in W}d\bigl(w(x),w(y)\bigr).
\end{equation}
Then $\rho$ is a compatible metric on $X$ and every $h\in\F$ is nonexpanding with respect to
$\rho$.
%, and \begin{equation}\label{eq:3.3} \max_{u\in\F^n}\diam_\rho u(X)\le D_n(d,\F)\qquad(n\ge1).
%\end{equation}
\end{lemma}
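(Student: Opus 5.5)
Since $W$ contains $\id_X$, we have $\rho\ge d$. In particular $\rho(x,y)=0$ forces $x=y$. Symmetry and the triangle inequality hold termwise for each $d(w(x),w(y))$, and they pass to the supremum. Finiteness of $\rho$ is immediate: every term is at most $\diam_d X<\infty$. So $\rho$ is a metric.

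Nonexpansion is also quick. For $h\in\F$ and $w\in W$, the composition $w\circ h$ again lies in $W$, because $\F^n\circ h\subseteq\F^{n+1}$. Hence
\[
\rho(h(x),h(y))=\sup_{w\in W}d\bigl(w(h(x)),w(h(y))\bigr)\le\rho(x,y).
\]

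The main work is compatibility, meaning that the identity $(X,d)\to(X,\rho)$ is continuous. The inverse direction is clear from $d\le\rho$. Since $X$ is compact, it then suffices to prove continuity of the identity from $d$ to $\rho$ at each point, and uniform continuity will even follow. Fix $\varepsilon>0$. By topological contractivity, choose $n$ with $D_n(d,\F)<\varepsilon$. Monotonicity of $D_k$ in $k$ follows from $w'(X)\subseteq w(X)$ whenever $w'=w\circ f$. Using it, every $w\in\F^k$ with $k\ge n$ satisfies $d(w(x),w(y))\le\diam_d w(X)<\varepsilon$ for all $x,y$.

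The remaining words are those in the finite set $\bigcup_{k<n}\F^k$. Each of these is a continuous map on a compact space, hence uniformly continuous. So there is $\eta>0$ such that $d(x,y)<\eta$ implies $d(w(x),w(y))<\varepsilon$ for all of these finitely many $w$. Combining the two cases, $d(x,y)<\eta$ implies $\rho(x,y)\le\varepsilon$.

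Thus the identity is uniformly continuous in both directions, and $\rho$ is compatible. The only point needing care is this split of $W$ into a finite part and a tail controlled by contractivity. Once that split is made, no real obstacle remains.
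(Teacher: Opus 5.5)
Your proposal is correct and follows essentially the same route as the paper. Nonexpansion comes from $\{w\circ h : w\in W\}\subseteq W$, and compatibility comes from splitting $W$ into the finitely many words of length less than $n$, handled by uniform continuity, and the tail, handled by contractivity together with the monotonicity of $D_k(d,\mathcal F)$ (which you justify explicitly where the paper only calls it clear).
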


\begin{proof}
Compactness makes $d$ bounded, so the supremum in \eqref{eq:3.2} is finite. Symmetry and
the triangle inequality follow from the corresponding properties of $d$. Since
$\id_X\in W$, we have $\rho\ge d$. %, and $\rho$ separates points.

To prove compatibility of $\rho$ and $d$, fix $\eta>0$. Choose $N$ such that $D_n(d,\F)<\eta/2$ for all
$n\ge N$. %Words of length at least $N$ contribute less than $\eta/2$ to \eqref{eq:3.2}.
The finitely many $w\in \bigcup_{n=0}^{N-1}\F^n$ are uniformly continuous, so there is $\delta>0$ such
that
\[
d(x,y)<\delta\quad\Longrightarrow\quad d\bigl(w(x),w(y)\bigr)<\eta/2
\]
for every such $w$. Hence $d(x,y)<\delta$ implies
$\rho(x,y)\le\eta/2<\eta$. Together with $d\le\rho$, this proves that $d$ and $\rho$
generate the same topology.

For $h\in\F$ we have $\{w\circ h : w\in W\}\subseteq W$, and therefore
\begin{equation*} %\label{eq:3.4}
\rho\bigl(h(x),h(y)\bigr)=\sup_{w\in W}d\bigl((w\circ h)(x),(w\circ h)(y)\bigr)
\le\rho(x,y).
\end{equation*}
Thus, every $h\in\F$ is nonexpanding with respect to $\rho$.
%Finally, for $u\in\F^n$,
%\[ \diam_\rho u(X)=\sup_{w\in W}\diam_d\bigl((w\circ u)(X)\bigr)\le D_n(d,\F), \]
%because $w\circ u$ has a representation of length at least $n$ and the sequence in
%\eqref{eq:3.1} is nonincreasing. This proves \eqref{eq:3.3}.
\end{proof}

\section{Covering numbers}\label{sectionCovering}

Throughout this section, $(S,\rho)$ is a compact metric space homeomorphic to the circle.
A \emph{closed arc} is a subset homeomorphic to $[0,1]$, and a \emph{subcontinuum} is a
nonempty compact connected subset. Every subcontinuum of $S$ is a singleton, a closed
arc, or $S$ itself.

Let $\varepsilon>0$ and let $K\subseteq S$ be nonempty. An \emph{$\varepsilon$-cover} of
$K$ is a finite family of connected subsets of $S$, each of $\rho$-diameter at most
$\varepsilon$, whose union contains $K$. Define
\begin{equation*} %\label{eq:4.1}
N_\varepsilon(K)=\min\{|\mathcal C| : \mathcal C\text{ is an $\varepsilon$-cover of }K\}.
\end{equation*}
Since $S$ is a continuous image of $[0,1]$, uniform continuity shows that $S$, and hence
every $K$, has an $\varepsilon$-cover, so the minimum is well defined. Clearly
$N_\varepsilon(\{x\})=1$. An $\varepsilon$-cover of $K$
with $N_\varepsilon(K)$ members is called \emph{optimal}.
The closure of a connected set is connected and has the same diameter, so we may and do
assume that the members of $\varepsilon$-covers are closed. Then every member is a
singleton, a closed arc, or $S$. Notice that the condition concerns the diameters of the
covering sets only; no length structure is assumed for $\rho$.

\begin{lemma}\label{lem:restr}
Let $\varepsilon>0$. If $K,K_1,\dots,K_m$ are nonempty subsets of $S$ with
$K\subseteq K_1\cup\dots\cup K_m$, then
\begin{equation*} %\label{eq:4.2}
N_\varepsilon(K)\le\sum_{i=1}^mN_\varepsilon(K_i).
\end{equation*}
\end{lemma}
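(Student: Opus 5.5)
The plan is simply to take optimal $\varepsilon$-covers of each $K_i$ and combine them. For each $i\in\{1,\dots,m\}$, fix an optimal $\varepsilon$-cover $\mathcal C_i$ of $K_i$; this exists because $N_\varepsilon(K_i)$ is a well-defined minimum, as noted above. Then $|\mathcal C_i|=N_\varepsilon(K_i)$. Next, set $\mathcal C=\mathcal C_1\cup\dots\cup\mathcal C_m$.

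First, $\mathcal C$ is a finite family of connected subsets of $S$, each of $\rho$-diameter at most $\varepsilon$, because every member comes from some $\mathcal C_i$. Second, its union contains
\[
\bigcup_{i=1}^m\bigcup\mathcal C_i\supseteq\bigcup_{i=1}^m K_i\supseteq K .
\]
Hence $\mathcal C$ is an $\varepsilon$-cover of $K$.

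By the definition of $N_\varepsilon$ as a minimum, together with the elementary bound $|\mathcal C|\le\sum_i|\mathcal C_i|$ (which may be strict if the families share members), we get
\[
N_\varepsilon(K)\le|\mathcal C|\le\sum_{i=1}^m N_\varepsilon(K_i).
\]

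There is no real obstacle here. The only points to check are that optimal covers exist, which is already established in the text, and that the $K_i$ are nonempty, so that $N_\varepsilon(K_i)$ is defined. The convention of using closed members plays no role.
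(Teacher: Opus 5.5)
Your proof is correct and is the paper's own argument: the union of optimal $\varepsilon$-covers of the sets $K_i$ is an $\varepsilon$-cover of $K$, so its cardinality, at most $\sum_i N_\varepsilon(K_i)$, bounds $N_\varepsilon(K)$.
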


\begin{proof}
The union of optimal $\varepsilon$-covers of the sets $K_i$ is an $\varepsilon$-cover of
$K$.
\end{proof}

\begin{lemma}\label{lem:nonexp}
Let $h\colon S\to S$ be continuous and nonexpanding. If $K\subseteq S$ is nonempty, then
\begin{equation*} %\label{eq:4.3}
N_\varepsilon(h(K))\le N_\varepsilon(K)\qquad\text{for every }\varepsilon>0.
\end{equation*}
\end{lemma}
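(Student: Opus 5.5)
Take an optimal $\varepsilon$-cover $\mathcal C=\{C_1,\dots,C_m\}$ of $K$, with $m=N_\varepsilon(K)$, and push it forward by $h$. The family $\{h(C_1),\dots,h(C_m)\}$ will serve as an $\varepsilon$-cover of $h(K)$ with $m$ members. This gives $N_\varepsilon(h(K))\le m=N_\varepsilon(K)$ directly from the definition as a minimum.

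The verification has three steps, carried out in order.

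First, each $h(C_i)$ is connected, because $h$ is continuous and $C_i$ is connected.

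Second, each $h(C_i)$ has $\rho$-diameter at most $\varepsilon$. For $x,y\in C_i$, nonexpansion gives
\[
\rho\bigl(h(x),h(y)\bigr)\le\rho(x,y)\le\diam_\rho C_i\le\varepsilon .
\]
Taking the supremum over $x,y\in C_i$ yields $\diam_\rho h(C_i)\le\diam_\rho C_i\le\varepsilon$.

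Third, the family covers $h(K)$: since $K\subseteq\bigcup_i C_i$, we have $h(K)\subseteq\bigcup_i h(C_i)$.

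Because the definition of an $\varepsilon$-cover allows arbitrary connected sets, these three facts are all that is required. If one insists on closed members, note that $h(C_i)$ is compact whenever $C_i$ is closed, so closedness is also preserved.

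There is no genuine obstacle here. The only point needing care is to use nonexpansion in the form $\diam_\rho h(A)\le\diam_\rho A$ rather than any length-based reasoning, consistent with the paper's remark that no length structure is assumed for $\rho$.
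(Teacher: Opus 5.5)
Your proof is correct and follows the paper's argument: you push an optimal $\varepsilon$-cover of $K$ forward by $h$, and the images are connected, have $\rho$-diameter at most $\varepsilon$ by nonexpansion, and cover $h(K)$. Your extra remark that closedness is preserved, since closed subsets of the compact space $S$ have compact images, is also correct.
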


\begin{proof}
If $\mathcal C$ is an optimal $\varepsilon$-cover of $K$, then the sets $h(C)$,
$C\in\mathcal C$, are connected, their union contains $h(K)$, and
$\diam_\rho h(C)\le\diam_\rho C\le\varepsilon$.
\end{proof}

\begin{lemma}\label{lem:cut}
Let $\cP$ be a finite family of closed arcs in $S$ with pairwise disjoint interiors such
that $\bigcup\cP$ is connected. Then
\begin{equation*} %\label{eq:4.4}
\sum_{I\in\cP}N_\varepsilon(I)\le N_\varepsilon\Bigl(\bigcup\cP\Bigr)+2|\cP|\qquad
\text{for every }\varepsilon>0.
\end{equation*}
\end{lemma}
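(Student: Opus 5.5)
Start from an optimal $\varepsilon$-cover $\mathcal C$ of $U=\bigcup\cP$, so $|\mathcal C|=N_\varepsilon(U)$. The idea is to restrict each member of $\mathcal C$ to each arc $I\in\cP$ and count how many pieces arise. By the convention of Section~\ref{sectionCovering}, each $C\in\mathcal C$ is closed and is a singleton, a closed arc, or $S$. For each $I\in\cP$ we cover $I$ by the connected sets obtained from the members of $\mathcal C$ meeting $I$, intersected with $I$ (taking components where necessary). A connected subset of a component of $C\cap I$ has diameter at most $\diam C\le\varepsilon$, so each component lies in $C$ and is admissible. The resulting family is an $\varepsilon$-cover of $I$.

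Summing over $I$ then gives
\[
\sum_{I\in\cP}N_\varepsilon(I)\le\sum_{C\in\mathcal C}\#\{\text{pairs }(I,\text{component of }C\cap I)\}.
\]
It therefore suffices to show that the right-hand side is at most $|\mathcal C|+2|\cP|$.

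The counting goes as follows. Fix $C$. Since $C$ is an arc (or a point, or $S$) and the arcs of $\cP$ have pairwise disjoint interiors, $C$ meets the arcs of $\cP$ in a chain. At most two of the resulting pieces are "end pieces", namely those where $C$ enters $I$ without containing all of $I$. Every other piece is all of $I$ and is counted once per arc. Equivalently, each $C$ yields one piece plus one extra piece for each time it crosses from the interior of one $I$ into another, and each such crossing happens at an endpoint of some arc of $\cP$. We charge every extra piece to an endpoint $p$ of an arc $I\in\cP$ lying in $C$, with $C$ continuing beyond $p$ outside $I$.

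The difficulty is that one endpoint can be crossed by many members of $\mathcal C$. To control this, first normalize $\mathcal C$. By minimality, no member is contained in the union of the others. Hence, after discarding redundant sets, the arcs of an optimal cover of the connected set $U$ can be arranged so that each point of $S$ lies in at most two members. Concretely, take the members in cyclic order by left endpoint; if three arcs contained a common point, the middle one would be redundant. Then each endpoint of an arc in $\cP$ is crossed by at most two members of $\mathcal C$. The number of distinct endpoints is at most $2|\cP|$, but the bound needs only $2|\cP|$ in total. This is recovered by noting that each shared endpoint serves as the junction of two adjacent arcs, so it counts as one junction with a crossing multiplicity of at most 2. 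The number of junctions is at most $|\cP|$ because $U$ is connected and the arcs form a chain or a cycle.

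This gives $\sum_I N_\varepsilon(I)\le|\mathcal C|+2|\cP|$. The degenerate cases need separate checks: $U=S$, and members of $\mathcal C$ equal to $S$. A member equal to $S$ has diameter at most $\varepsilon$, so then $N_\varepsilon(I)=1$ for every $I$ and the bound is trivial. The main obstacle is the normalization step, namely proving rigorously that an optimal cover can be taken with multiplicity at most two. I expect to handle it by replacing $\mathcal C$ with an irredundant subfamily and using the linear (or cyclic) order on $S$.
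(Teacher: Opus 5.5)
Your strategy is essentially the paper's. You take an optimal $\varepsilon$-cover $\mathcal C$ of $U=\bigcup\cP$ and use that optimality alone forces every point to lie in at most two members; no modification of $\mathcal C$ is needed, since of three arcs through a point one lies in the union of the other two and could be deleted. You then double count incidences between members and arcs, and charge the surplus to at most $|\cP|$ endpoints, each lying in at most two members.

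\textbf{The gap is in your count of pieces.} When $U$ is an arc (the chain case), a member $C$ may cover the gap $S\setminus U$. Then $C\cap U$ has two components. The second component gives a surplus piece where $C$ re-enters $U$ at an endpoint of $U$, and that endpoint is not a junction. So your claim that each $C$ yields one piece plus one per crossing between arcs of $\cP$ is false in this case.

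It already fails for $\cP=\{I\}$. A member covering the short gap $S\setminus I$ meets $I$ in two pieces, although there are no junctions. Such a member can belong to an optimal cover, for example in $\T$ with $I=[0.05,0.95]$ and $\varepsilon=0.35$. Charging only to junctions therefore misses these pieces.

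The inequality survives if you use slack you overlooked. In the chain case there are only $|\cP|-1$ junctions. Every gap-covering member contains both endpoints of $U$, so by the multiplicity bound there are at most two such members, each contributing exactly one extra piece. This gives $2(|\cP|-1)+2=2|\cP|$. In the cycle case $U=S$, no gap-covering member exists and there are exactly $|\cP|$ junctions.

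The paper avoids this case split. It counts members meeting $I$ rather than components of $C\cap I$, fixes an orientation, and charges to the initial endpoint $a_I$ of each $I\in\cP$ via $|\{I\in\cP:C\cap I\ne\emptyset\}|\le 1+|\{I\in\cP:a_I\in C\}|$. The reason is that, traversing $C$ positively, every arc it meets is entered through its initial endpoint, except possibly the first. There are exactly $|\cP|$ initial endpoints, each in at most two members. So the bound follows uniformly, without distinguishing chains from cycles and without using that $\bigcup\cP$ is connected.
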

 
\begin{proof}
Fix $\varepsilon>0$, and put
$M=\bigcup\cP$. Let $\mathcal C$ be an optimal
$\varepsilon$-cover of $M$ consisting of closed sets.
Every point of $M$ belongs to at most two members of $\mathcal C$.
Fix an orientation of $S$, and denote by $a_I$ the initial
endpoint of each $I\in\cP$. For every $C\in\mathcal C$ it can be easily seen that
\[
\bigl|\{I\in\cP:C\cap I\ne\emptyset\}\bigr|
\le 1+\bigl|\{I\in\cP:a_I\in C\}\bigr|.
\]
%To see this, traverse $C$ in the chosen orientation.
%Except possibly for the first arc of $\cP$ encountered,
%every arc that meets $C$ is entered through its initial
%endpoint. This also applies to intersections consisting
%only of an endpoint; if $C=S$, the inequality is immediate.

Now put
$\mathcal C_I=\{C\in\mathcal C:C\cap I\ne\emptyset\}$.
Since $\mathcal C_I$ covers $I$, Lemma \ref{lem:restr}, double counting and the
bound above give
\[
\begin{aligned}
\sum_{I\in\cP}N_\varepsilon(I)
&\le \sum_{I\in\cP}|\mathcal C_I|\\
&= \sum_{C\in\mathcal C}
   \bigl|\{I\in\cP:C\cap I\ne\emptyset\}\bigr|\\
&\le |\mathcal C|
   +\sum_{I\in\cP}\bigl|\{C\in\mathcal C:a_I\in C\}\bigr|\\
&\le N_\varepsilon(M)+2|\cP|.
\end{aligned}
\]
\end{proof}

\begin{lemma}\label{lem:end}
If $a,b$ are the distinct endpoints of a closed arc $L\subseteq S$, then
\begin{equation*} %\label{eq:4.6}
\rho(a,b)\le\varepsilon N_\varepsilon(L)\qquad\text{for every }\varepsilon>0.
\end{equation*}
In particular, $N_\varepsilon(L)\to\infty$ as $\varepsilon\downarrow0$.
\end{lemma}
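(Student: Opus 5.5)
Take an optimal $\varepsilon$-cover $\mathcal C$ of $L$ consisting of closed connected sets, each a singleton, a closed arc, or $S$, and each of $\rho$-diameter at most $\varepsilon$. The plan is to extract from $\mathcal C$ a chain of members linking $a$ to $b$ and then apply the triangle inequality along it. Two cases are trivial. If some member of $\mathcal C$ contains both $a$ and $b$, then $\rho(a,b)\le\varepsilon\le\varepsilon N_\varepsilon(L)$. If some member equals $S$, that member contains both endpoints, so we are in the first case again.

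In the main case, each member is a singleton or a closed arc. Replace each member $C$ by $C\cap L$. This set is still closed, and it has diameter at most $\varepsilon$, but it may be disconnected. That causes no harm, since the chain argument only needs each set to have small diameter and the union to be connected. The sets $C\cap L$, $C\in\mathcal C$, are finitely many closed sets whose union is the connected set $L$. By the standard chain lemma, for any two points of a connected space covered by finitely many closed sets there is a chain $C_1,\dots,C_k$ of distinct members with $a\in C_1$, $b\in C_k$ and $C_i\cap C_{i+1}\cap L\ne\emptyset$. The proof of the chain lemma is short. Let $\mathcal A$ be the members reachable from a set containing $a$ by chains. The union of $\mathcal A$ and the union of the remaining members are both closed, and by definition of $\mathcal A$ they are disjoint. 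Both cover $L$, so by connectedness the second union misses $L$, and hence $b$ lies in some member of $\mathcal A$.

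Choose $p_i\in C_i\cap C_{i+1}$ for $1\le i<k$, and set $p_0=a$ and $p_k=b$. Then $p_{i-1}$ and $p_i$ both lie in $C_i$, so $\rho(p_{i-1},p_i)\le\varepsilon$. Summing along the chain gives
\[
\rho(a,b)\le\sum_{i=1}^k\rho(p_{i-1},p_i)\le k\varepsilon\le\varepsilon N_\varepsilon(L).
\]

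For the limit statement, $a\ne b$ gives $\rho(a,b)>0$, and then $N_\varepsilon(L)\ge\rho(a,b)/\varepsilon\to\infty$ as $\varepsilon\downarrow0$.

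The main obstacle is the chain-extraction step, because the problem gives no length structure to work with. The clean way through is the closed-set connectedness argument above. It uses only that $\mathcal C$ is finite and that its members are closed, which the section assumes throughout. It needs no geometry of arcs beyond the connectedness of $L$.
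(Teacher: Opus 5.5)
Your proof is correct and follows the same route as the paper: take an optimal $\varepsilon$-cover, extract a chain of distinct members linking $a$ to $b$ (so $k\le N_\varepsilon(L)$), and sum $\rho$ along the chain to get $\rho(a,b)\le k\varepsilon\le\varepsilon N_\varepsilon(L)$. The only difference is that you prove the chain-existence step via the closed-set connectedness argument inside $L$, which the paper asserts in one line; your preliminary cases are harmless but unnecessary.
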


\begin{proof}
Let $\mathcal C$ be an optimal $\varepsilon$-cover of $L$, and fix $C_a\in\mathcal C$ with
$a\in C_a$. Since $L$ is connected, we can find a shortest sequence $C_1=C_a,C_2,\dots,C_k$ with $b\in C_k$ and points $x_i\in C_i\cap C_{i+1}$ for
$1\le i<k$. Put $x_0=a$ and $x_k=b$. Then $x_{i-1},x_i\in C_i$ for every $i$, and
therefore
\[
\rho(a,b)\le\sum_{i=1}^k\rho(x_{i-1},x_i)\le \sum_{i=1}^k\diam C_i\le k\varepsilon\le\varepsilon N_\varepsilon(L).
\]
The final assertion follows from $\rho(a,b)>0$.
\end{proof}

\section{The bounded-loss estimate}\label{sectionbounded}

\begin{lemma}\label{lem:loss}
Let $f,g\colon S\to S$ be nonexpanding maps of a metric circle $(S,\rho)$. Suppose that
\[
A=f(S),\qquad B=g(S)
\]
are nondegenerate proper closed arcs and that $A\cup B=S$. Choose $u,v\in S$ such that
$f(u)$ and $f(v)$ are the two endpoints of $A$. Then every nondegenerate closed arc
$J\subseteq S$ with
\[
\inte_S(J)\cap\{u,v\}=\emptyset
\]
satisfies
\begin{equation}\label{eq:5.1}
0\le N_\varepsilon(J)-N_\varepsilon(f(J))\le 12\qquad\text{for every }\varepsilon>0.
\end{equation}
\end{lemma}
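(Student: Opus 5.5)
The plan is to prove the two inequalities in \eqref{eq:5.1} separately. The lower bound $N_\varepsilon(f(J))\le N_\varepsilon(J)$ is Lemma~\ref{lem:nonexp}. The upper bound comes from comparing covering numbers of the arcs determined by the preimage points $u,v$, together with a slightly sharpened form of Lemma~\ref{lem:cut}.

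\textbf{Step 0 (sharpened cutting lemma).} I first re-run the proof of Lemma~\ref{lem:cut}, counting only the \emph{junction points}, i.e.\ the points shared by two consecutive arcs. If a closed member $C$ of an optimal cover meets $k$ of the arcs, then $C$ contains at least $k-1$ junction points (and if $C=S$, the claim is trivial). Each point lies in at most two members of an optimal cover.
\begin{itemize}
\item If $\cP$ is a chain of $m$ arcs whose union is an arc, there are $m-1$ junctions, so
\[
\sum_{I\in\cP} N_\varepsilon(I)\le N_\varepsilon\Bigl(\bigcup\cP\Bigr)+2(m-1).
\]
\item If two arcs $P,Q$ with disjoint interiors have union $S$, there are two junctions, so
\[
N_\varepsilon(P)+N_\varepsilon(Q)\le N_\varepsilon(S)+4.
\]
\end{itemize}

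\textbf{Step 1 (both arcs map onto $A$).} Since $A$ is nondegenerate, $u\ne v$. The points $u,v$ cut $S$ into two closed arcs $P,Q$ with endpoints $u,v$. The set $f(P)$ is a subcontinuum of the proper arc $A$ containing both endpoints of $A$, so $f(P)=A$, and likewise $f(Q)=A$. By Lemma~\ref{lem:nonexp},
\[
N_\varepsilon(P),\ N_\varepsilon(Q)\ \ge\ N_\varepsilon(A).
\]
The same argument applies to $g$: choose $u',v'$ with $g(u'),g(v')$ the endpoints of $B$. They give arcs $P',Q'$ with $g(P')=g(Q')=B$, so $N_\varepsilon(P'),N_\varepsilon(Q')\ge N_\varepsilon(B)$.

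\textbf{Step 2 (three global comparisons).} From Lemma~\ref{lem:restr} we have $N_\varepsilon(S)\le N_\varepsilon(A)+N_\varepsilon(B)$. Combining this with Step~0 gives
\[
2N_\varepsilon(B)\le N_\varepsilon(P')+N_\varepsilon(Q')\le N_\varepsilon(S)+4\le N_\varepsilon(A)+N_\varepsilon(B)+4,
\]
hence $N_\varepsilon(B)\le N_\varepsilon(A)+4$. Similarly,
\[
N_\varepsilon(P)+N_\varepsilon(A)\le N_\varepsilon(P)+N_\varepsilon(Q)\le N_\varepsilon(A)+N_\varepsilon(B)+4,
\]
hence $N_\varepsilon(P)\le N_\varepsilon(B)+4\le N_\varepsilon(A)+8$. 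By symmetry the same bound holds for $Q$.

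\textbf{Step 3 (local estimate for $J$).} Because $\inte_S(J)$ avoids $u,v$, the arc $J$ lies in one of $P,Q$, say $P$. Then $P=P_1\cup J\cup P_2$ is a chain of at most three arcs; any degenerate or empty end piece is simply dropped, which only improves the constants. Step~0 gives
\[
N_\varepsilon(J)+N_\varepsilon(P_1)+N_\varepsilon(P_2)\le N_\varepsilon(P)+4.
\]
On the other hand, $A=f(P)=f(P_1)\cup f(J)\cup f(P_2)$, so Lemma~\ref{lem:restr} and Lemma~\ref{lem:nonexp} give
\[
N_\varepsilon(A)\le N_\varepsilon(f(J))+N_\varepsilon(P_1)+N_\varepsilon(P_2).
\]
Subtracting, and then using Step~2,
\[
N_\varepsilon(J)\le N_\varepsilon(P)+4-N_\varepsilon(A)+N_\varepsilon(f(J))\le N_\varepsilon(f(J))+12.
\]

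\textbf{Main obstacle.} The delicate point is Step~0. The constant $12$ is obtained only if the cutting lemma is used with the sharper constants $2(m-1)$ for chains and $4$ for a two-arc splitting of the circle, rather than $2|\cP|$. I would check carefully the junction-counting claim: a closed arc member meeting $k$ consecutive pieces must contain $k-1$ of the junction points. The cases to check are members that wrap around the circle, and members that merely touch a piece at an endpoint.
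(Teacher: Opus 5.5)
\textbf{The gap is the first bullet of Step~0}, which is exactly the case you flagged. For a chain whose union is a proper arc $P$, the members of an optimal $\varepsilon$-cover of $P$ are connected subsets of $S$, not of $P$, so one of them may pass through $S\setminus P$. Such a member $C$ has $C\cap P$ disconnected, hence $C\supseteq Q=\overline{S\setminus P}$. It can meet $P_1$ near one endpoint of $P$ and $P_2$ near the other while containing neither junction $c_1=P_1\cap J$ nor $c_2=J\cap P_2$.

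This really happens. Take $(\T,d_\T)$ with $\varepsilon=1/4$, $P=[0.005,0.995]$, $c_1=0.2$, $c_2=0.85$. The four arcs of length $1/4$ starting at $-0.1,\,0.15,\,0.4,\,0.65$ cover $\T$. Measure forces $N_\varepsilon(P)\ge 4$, so they form an optimal cover of $P$. The first arc meets $P_1$ and $P_2$ but contains no junction, so ``meets $k$ pieces $\Rightarrow$ contains $k-1$ junctions'' is false.

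Repairing the count in the obvious way does not recover your constant. Every such member contains both endpoints $u,v$ of $P$, so there are at most two of them, each costing one extra. That gives only $N_\varepsilon(P)+6$ for three pieces, which is just Lemma~\ref{lem:cut}, and your Step~3 then yields $14$ rather than $12$. Your second bullet, by contrast, is not a sharpening at all: it is Lemma~\ref{lem:cut} with $|\cP|=2$, so it is fine.

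\textbf{How to close it.} A case split inside your framework suffices.
\begin{itemize}
\item Suppose some member of an optimal cover $\mathcal C$ of $P$ meets $P$ in a disconnected set. Then it contains $Q$, so $\mathcal C$ covers $S$ and $N_\varepsilon(P)=N_\varepsilon(S)$. Your circle-split bound then gives $N_\varepsilon(Q)\le 4$. Hence $N_\varepsilon(A)\le 4$, $N_\varepsilon(B)\le N_\varepsilon(A)+4\le 8$, and $N_\varepsilon(S)\le 12$. So $N_\varepsilon(J)-N_\varepsilon(f(J))\le N_\varepsilon(S)-1\le 11$.
\item Otherwise every member meets $P$ in a connected set. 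Then your junction count is valid, and Steps~2--3 give $12$.
\end{itemize}

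\textbf{How the paper avoids the issue.} The paper never cuts a proper arc in this lemma. It applies Lemma~\ref{lem:cut} on the whole circle to the four pieces $P_1,J,P_2,Q$, getting $\sum N_\varepsilon(I)\le N_\varepsilon(S)+8$. Since the pieces on each side of $\{u,v\}$ map onto $A$, this gives $2N_\varepsilon(A)\le N_\varepsilon(S)+8-\Delta$. Together with $2N_\varepsilon(B)\le N_\varepsilon(S)+4$ and $N_\varepsilon(S)\le N_\varepsilon(A)+N_\varepsilon(B)$, it yields $\Delta\le 12$ with the unsharpened constant $2|\cP|$.
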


\begin{proof}
Fix $\varepsilon>0$ and abbreviate $N_\varepsilon$ to $N$. By Lemma~\ref{lem:nonexp},
\[
\Delta=N(J)-N(f(J))
\]
is nonnegative.

The points $u,v$ are distinct and divide $S$ into two closed arcs $H_1,H_2$. For each
$i$, the set $f(H_i)$ is a connected subset of $A$ containing both endpoints of $A$.
Hence
\begin{equation}\label{eq:5.2}
f(H_1)=f(H_2)=A.
\end{equation}

Let $Z$ be the set consisting of $u$, $v$ and the two endpoints of $J$, and 
let $\cP$ consist of the closures of components of $S\setminus Z$. Thus $\cP$ is
a family consisting of at
most four closed arcs with disjoint interiors, which have endpoints in $Z$ (see Figure~\ref{fig:loss}). Call the elements of $\cP$ pieces. Since neither $u$
nor $v$ belongs to $\inte_S(J)$, the arc $J$ itself is one of the pieces. The pieces
contained in $H_1$ cover $H_1$, so their images under $f$ cover $A$ by \eqref{eq:5.2}, and the same
is true for the pieces contained in $H_2$. Lemma~\ref{lem:restr}, applied on the two
sides, gives
\[
2N(A)=N(f(H_1))+N(f(H_2))\le\sum_{I\in\cP}N(f(I)).
\]
For every piece $I$, Lemma~\ref{lem:nonexp} gives $N(f(I))\le N(I)$, and on the
distinguished piece $J$ the loss is $\Delta$. Hence, by Lemma~\ref{lem:cut} with $M=S$,
since $\cP$ has at most four pieces,
\begin{equation}\label{eq:5.3}
2N(A)\le\sum_{I\in\cP}N(f(I))\le\sum_{I\in\cP}N(I)-\Delta\le N(S)+2\cdot 4-\Delta=N(S)+8-\Delta.
\end{equation}

\begin{figure}[htb]
\centering
\begin{tikzpicture}[>=Latex,font=\small]
  \def\rad{1.8}
  % ---------- domain ----------
  \begin{scope}
    \draw[black!40] (0,0) circle (\rad);
    \draw[line width=0.9pt] (160:\rad) arc (160:340:\rad);   % H_2
    \draw[line width=0.9pt] (340:\rad) arc (340:400:\rad);   % I_2
    \draw[line width=0.9pt] (120:\rad) arc (120:160:\rad);   % I_1
    \draw[line width=2.2pt] (40:\rad) arc (40:120:\rad);     % J
    \fill (160:\rad) circle (1.7pt);
    \fill (340:\rad) circle (1.7pt);
    \draw[fill=white] (40:\rad) circle (1.5pt);
    \draw[fill=white] (120:\rad) circle (1.5pt);
    \node at (160:\rad+0.33) {$u$};
    \node at (340:\rad+0.33) {$v$};
    \node at (80:\rad+0.3) {$J$};
    \node at (140:\rad+0.4) {$I_1$};
    \node at (10:\rad+0.4) {$I_2$};
    \node at (250:\rad+0.42) {$H_2$};
    \node at (90:\rad-0.85) {$H_1=I_1\cup J\cup I_2$};
    \node at (270:\rad-0.55) {$S$};
  \end{scope}
  \draw[->,thick] (\rad+1.0,0) -- (\rad+2.3,0) node[midway,above] {$f$};
  % ---------- target ----------
  \begin{scope}[xshift=7.7cm]
    \draw[black!40] (0,0) circle (\rad);
    \draw[line width=2.2pt] (-30:\rad) arc (-30:210:\rad);     % A = f(S)
    \draw[line width=0.9pt] (210:\rad) arc (210:330:\rad);     % A^*
    \draw[dashed,thick] (180:\rad+0.3) arc (180:360:\rad+0.3); % B = g(S)
    \draw[line width=0.9pt] (100:\rad-0.35) arc (100:210:\rad-0.35);  % f(I_1)
    \draw[line width=0.9pt] (-30:\rad-0.35) arc (-30:80:\rad-0.35);   % f(I_2)
    \draw[line width=2.2pt] (55:\rad-0.6) arc (55:130:\rad-0.6);      % f(J)
    \fill (210:\rad) circle (1.7pt);
    \fill (330:\rad) circle (1.7pt);
    \node at (90:\rad+0.35) {$A=f(S)$};
    \node at (270:\rad+0.65) {$B=g(S)$};
%    \node at (270:\rad-0.35) {$A^*$};
    \node at (213:\rad+0.72) {$f(u)$};
    \node at (327:\rad+0.72) {$f(v)$};
    \node at (160:\rad-0.88) {$f(I_1)$};
    \node at (20:\rad-0.88) {$f(I_2)$};
    \node at (92:\rad-0.95) {$f(J)$};
  \end{scope}
\end{tikzpicture}
\caption{Left: $\cP=\{I_1,J,I_2,H_2\}$. Right: $A=f(S)=f(H_1)=f(H_2)$ with endpoints $f(u)$, $f(v)$, the images
$f(I_1)$, $f(J)$, $f(I_2)$ cover $A$.}
\label{fig:loss}
\end{figure}

For $g$, choose $u',v'\in S$ mapped by $g$ to the two endpoints of $B$. The closures of
the two components of $S\setminus\{u',v'\}$ are both mapped onto $B$. Hence
Lemma~\ref{lem:nonexp} and Lemma~\ref{lem:cut} with $M=S$ and $Z=\{u',v'\}$ give
\begin{equation}\label{eq:5.4}
2N(B)\le N(S)+4.
\end{equation}

Since $A\cup B=S$, 
Lemma~\ref{lem:restr} yields
\begin{equation}\label{eq:5.5}
N(S)\le N(A)+N(B).
\end{equation}
Combining \eqref{eq:5.3}--\eqref{eq:5.5},
\[
2N(S)\le2N(A)+2N(B)\le2N(S)+12-\Delta.
\]
Thus $\Delta\le12$, proving \eqref{eq:5.1}.
\end{proof}

\section{Proof of the main theorem}\label{sectionmain}

\begin{proof}[Proof of Theorem~\ref{thm:main}]
Suppose, towards a contradiction, that $\F=\{f,g\}$ is topologically contractive and
that $f(S)\cup g(S)=S$. By Lemma~\ref{lem:metric} there is a compatible metric $\rho$, under which both maps $f$ and $g$ are nonexpanding.

Clearly, neither $f$ nor $g$ is surjective and thus also neither is constant. Therefore
\[
A=f(S),\qquad B=g(S)
\]
are nondegenerate proper closed arcs.

Choose $u,v\in S$ mapped by $f$ to the two endpoints of $A$.
By applying Lemma~\ref{lem:loss}
we get that $f$ cannot be constant on any nondegenerate arc. Otherwise there
would be a nondegenerate subarc $J$ whose interior avoids $u$ and $v$, and
\eqref{eq:5.1} would imply
\[
N_\varepsilon(J)\le N_\varepsilon(f(J))+12=13\qquad\text{for every }\varepsilon>0,
\]
contradicting Lemma~\ref{lem:end}.

Set
\[
J_n=f^n(S),\qquad n\ge1.
\]
The arcs $J_n$ are nondegenerate by the preceding paragraph. They are nested, since
\[
f^{n+1}(S)=f^n(f(S))\subseteq f^n(S),
\]
and their $\rho$-diameters tend to zero.
Hence
\begin{equation}\label{eq:6.1}
\bigcap_{n\ge1}J_n=\{p\}\ \text{ for some } p\in S,\qquad f(p)=p.
\end{equation}
Every inclusion $J_{n+1}\subseteq J_n$ is strict; equality at one stage would make all
subsequent images equal to the same nondegenerate arc.

For all sufficiently large $n$,
\begin{equation}\label{eq:6.2}
\inte_S(J_n)\cap\{u,v\}=\emptyset.
\end{equation}
Indeed, any point of $\{u,v\}$ different from $p$ is eventually outside $J_n$, by
\eqref{eq:6.1} and nesting. If, for instance, $u=p$, then since $f(u)$ is an endpoint
of $A$ and $f(u)=f(p)=p$, we conclude that the point $p$
is an endpoint of $J_n$ and not an interior point. 
The same reasoning applies if $v=p$.

\begin{figure}[htb]
\centering
\begin{tikzpicture}[x=0.9cm,y=1cm,font=\small]
  \draw[densely dotted] (5.3,0.5) -- (5.3,-2.05);
  \node[above] at (5.3,0.5) {$p$};
  \draw[thick] (0,0) -- (10,0);
  \draw (0,0.1) -- (0,-0.1) (10,0.1) -- (10,-0.1);
  \node[right] at (10.25,0) {$J_1=A=f(S)$};
  \draw[thick] (1.4,-0.7) -- (8.8,-0.7);
  \draw (1.4,-0.6) -- (1.4,-0.8) (8.8,-0.6) -- (8.8,-0.8);
  \node[right] at (10.25,-0.7) {$J_2=f^2(S)$};
  \node at (2.05,-1.25) {$\vdots$};
  \node at (8.2,-1.25) {$\vdots$};
  \node[right] at (10.35,-1.25) {$\vdots$};
  \draw[thick] (2.6,-1.9) -- (7.6,-1.9);
  \draw[line width=2.4pt] (3.9,-1.9) -- (6.6,-1.9);
  \foreach \x in {2.6,3.9,6.6,7.6} \draw (\x,-1.8) -- (\x,-2.0);
  \node[right] at (10.25,-1.9) {$J=J_n$};
  \draw[decorate,decoration={brace,mirror,amplitude=4pt}] (2.6,-2.6) -- (3.9,-2.6)
     node[midway,below=4pt] {$L$};
  \draw[decorate,decoration={brace,mirror,amplitude=4pt}] (3.9,-2.6) -- (6.6,-2.6)
     node[midway,below=4pt] {$K=J_{n+1}=f(J)$};
\end{tikzpicture}
\caption{The arcs $J=J_n$, $K=f(J)$, and $L$.}
\label{fig:end}
\end{figure}

Fix an $n$ satisfying \eqref{eq:6.2}, and put
\[
J=J_n,\qquad K=J_{n+1}=f(J).
\]
Then $K\subsetneq J$ and both arcs are nondegenerate. 
Then $J\setminus K$ has at least one (and also at most two) nondegenerate component.
Let $L$ be the closure of such a component, so it is a nondegenerate arc (see Figure~\ref{fig:end}). Then $L$ intersects $K$ in their common endpoint, and $L\cup K\subseteq J$. By Lemma \ref{lem:cut} and
Lemma~\ref{lem:restr},
\[
N_\varepsilon(L)+N_\varepsilon(K)\le N_\varepsilon(L\cup K)+4\le N_\varepsilon(J)+4.
\]
Since $K=f(J)$, Lemma~\ref{lem:loss} yields
\begin{equation}\label{eq:6.3}
N_\varepsilon(L)\le N_\varepsilon(J)-N_\varepsilon(f(J))+4\le12+4=16
\qquad\text{for every }\varepsilon>0.
\end{equation}
The arc $L$ is fixed independently of $\varepsilon$. If $a,b$ are distinct
endpoints of $L$, then Lemma~\ref{lem:end} and \eqref{eq:6.3} give
\[
0<\rho(a,b)\le\varepsilon N_\varepsilon(L)\le16\varepsilon\qquad\text{for every }
\varepsilon>0,
\]
which is a contradiction. Thus the theorem is proved.
\end{proof}

\section{Questions}

Let us recall Question 29 from \cite{KarasovaVejnar}, whose case $n=1$ is settled by the present paper. The number $n+2$ is suggested by the Lusternik--Schnirelmann theorem (see the paper for a discussion).

\begin{question}\label{q:spheres}
What is $\wn(S^n)$ for every $n\ge2$?
\end{question}

It is easy to see that $\wn(S^n)\leq n+2$. However, we still do not know whether $\wn(S^n)\geq 3$ for $n\geq 2$.

%\begin{question} What topological graphs $G$ satisfy $\wn(G)=2$? \end{question}

\section{Acknowledgement}
The author acknowledges the use of artificial intelligence tools in obtaining the results of this paper. The work developed over a period of approximately six months, beginning with numerical experiments and heuristic estimates, followed by attempts to resolve the main question for progressively broader special classes of maps, which ultimately led to the general results presented here.
The author takes full responsibility for these results.

\bibliographystyle{abbrv}
\bibliography{citace}

\end{document}